 \newtheorem{thm}{Theorem}[section]
 \newtheorem{lem}[thm]{Lemma}
 \theoremstyle{definition}
 \theoremstyle{remark}
 \numberwithin{equation}{section}
\newcommand{\field}[1]{\mathbb{#1}}
\newcommand{\MM}{\field{M}}
\newcommand{\SL}{\mathrm{SL}}
\newcommand{\pSL}{\mathrm{(P)SL}}
\newcommand{\Z}{\mathrm{Z}}
\begin{document}
\title[Special rank one groups]
{Special rank one groups are perfect}

\author[Anja Steinbach]{Anja Steinbach}

\address{%
Justus-Liebig-Universit\"at Gie{\ss}en\\
Mathematisches Institut\\
Arndtstra{\ss}e 2\\
D 35392 Gie{\ss}en\\
Germany}

\email{Anja.Steinbach@math.uni-giessen.de}

\subjclass{Primary 20E42; Secondary 51E42}

\keywords{special (abstract) rank one group, Moufang set}

\date{October 7, 2011}

\begin{abstract}
We prove that special (abstract) rank one groups with arbitrary unipotent subgroups of size at least 4 are perfect.
\end{abstract}

\maketitle
\section{Introduction}

J. Tits \cite{Tits-Durham} defined Moufang sets in order to axiomatize the linear algebraic groups of relative rank one.
A closely related concept of so-called (abstract) rank one groups has been introduced by F. G. Timmesfeld \cite{TT-Buch}.

Here a group $X$ is an (abstract) rank one group with unipotent subgroups $A$ and $B$, 
if $X = \langle A,B\rangle$ with $A$ and $B$ different subgroups of $X$, and (writing $A^b = b^{-1}Ab$)
\begin{align*}
&\text{for each $1 \neq a \in A$, there is an element $1 \neq b \in B$ such that $A^b = B^a$,} \\
&\text{and vice versa.}
\end{align*}
We emphasize that in contrast to Timmesfeld's definition \cite[p. 1]{TT-Buch} we do not assume that $A$ and $B$ are nilpotent. In an (abstract) rank one group $X$ with unipotent subgroups $A$ and $B$,
the element $1 \neq b \in B$ with $A^b = B^a$ is uniquely determined for each $1 \neq a \in A$ (as $A \neq B$) and denoted by $b(a)$. Similary we define $a(b)$.

We say $X$ is special, if $b(a^{-1}) = b(a)^{-1}$ for all $1 \neq a  \in A$. This is equivalent with Timmesfeld's original definition, see Timmesfeld \cite[I (2.2), p. 17]{TT-Buch}.
In this note we prove 
\begin{thm}\label{Main-Result}
Any special (abstract) rank one group with arbitrary unipotent subgroups of size at least $4$ is perfect.
\end{thm}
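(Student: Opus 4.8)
The plan is to reduce perfectness to a statement about the action of a ``torus'' on the unipotent subgroup $A$, and then to isolate the role of the hypothesis $|A|\ge 4$ exactly as in the prototype $\SL_2(k)$, where perfectness holds precisely for $|k|\ge 4$ (the two genuine exceptions being $\SL_2(2)$ and $\SL_2(3)$).

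First I would record two reductions. For $1\ne a\in A$ put $g(a)=b(a)a^{-1}$; from $A^{b(a)}=B^a$ one computes $A^{g(a)}=a b(a)^{-1}Ab(a)a^{-1}=a B^a a^{-1}=B$, so $A$ and $B$ are conjugate in $X$. Consequently their images in $X/X'$ coincide, and since $X=\langle A,B\rangle$ this common image generates the abelian group $X/X'$. Hence $X$ is perfect if and only if $A\subseteq X'$. As $A'=[A,A]\subseteq X'$ automatically, the image of $A$ in $X/X'$ is a quotient of the abelian group $V:=A/A'$, and it suffices to show that this image is trivial.

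Next I would build the torus. For $a,a'\in A^{\#}$ the element $h(a,a')=g(a)g(a')^{-1}$ satisfies $A^{h(a,a')}=B^{g(a')^{-1}}=A$, so it lies in $N_X(A)$ and induces an automorphism of $A$, hence of $V$. Writing $\gamma_a\colon A\to B$ for the isomorphism $x\mapsto x^{g(a)}$, this automorphism is $\gamma_{a'}^{-1}\gamma_a$. Let $T\le N_X(A)$ be generated by all the $h(a,a')$. Since $[h(a,a'),x]=x^{-1}x^{h(a,a')}\in X'$ for every $x\in A$, we have $[T,A]\subseteq A\cap X'$, and therefore $A=[T,A]A'\subseteq X'$ will follow once I prove $[T,V]=V$, i.e. that $V$ has no nonzero $T$-coinvariants. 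Here specialness enters: the relation $b(a^{-1})=b(a)^{-1}$ is exactly what makes the maps $\gamma_a$ (equivalently the elements $\mu_a=b(a)^{-1}a\,b(a)^{-1}$, which interchange $A$ and $B$) behave coherently, so that $T$ is a genuine analogue of the diagonal torus and its action on $V$ is well controlled.

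The heart of the argument — and the step I expect to be the main obstacle — is to prove $[T,V]=V$ from $|A|\ge 4$. I would argue by contradiction: if $[T,V]\subsetneq V$ there is a nontrivial abelian quotient $q\colon A\to W$ on which $T$ acts trivially, so that $q(\gamma_{a'}^{-1}\gamma_a(x))=q(x)$ for all $x\in A$ and all $a,a'\in A^{\#}$. Equivalently, $q\circ\gamma_a^{-1}\colon B\to W$ is independent of $a$, say equal to a fixed surjection $\bar q\colon B\to W$ with $q=\bar q\circ\gamma_a$; then $\bar q$ annihilates every element $\gamma_a(x)^{-1}\gamma_{a'}(x)\in B$. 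It therefore suffices to show that, when $|A|\ge 4$, one can choose $a,a'\in A^{\#}$ for which $\{\gamma_a(x)^{-1}\gamma_{a'}(x):x\in A\}$ generates $B$ modulo $B'$, contradicting surjectivity of $\bar q$ onto $W\ne 1$. In $\SL_2(k)$ this map sends $x$ (in the coordinate $u$) to the translation by $u(t'^2-t^2)/(t^2t'^2)$, which is onto as soon as $t^2\ne t'^2$, i.e. as soon as $|k|\ge 4$, while for $|k|\le 3$ it is always trivial. The difficulty in the abstract setting is that $A$ and $B$ need be neither abelian nor nilpotent, so this generation statement cannot be read off from a field; I expect to prove it by combining the special relation with the sharp transitivity of $A$ on the conjugates $\{B^{a}:a\in A^{\#}\}$ to pin down the dependence of $\gamma_a$ on $a$ strongly enough that two distinct admissible choices of $a,a'$ — which exist precisely because $|A|\ge 4$ supplies enough nontrivial elements — already fill out $B/B'$.
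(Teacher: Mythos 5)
Your preliminary reductions are sound: $A^{g(a)}=B$, hence $A$ and $B$ are conjugate and perfectness is equivalent to $A\subseteq X'$; the elements $h(a,a')$ normalize $A$ and act on $V=A/A'$; and $A=[T,A]A'\subseteq X'$ would indeed follow from $[T,V]=V$. But the proof stops exactly where the theorem begins. The statement $[T,V]=V$ --- equivalently, your generation claim that for suitable $a,a'\in A^{\#}$ the set $\{\gamma_a(x)^{-1}\gamma_{a'}(x):x\in A\}$ generates $B$ modulo $B'$ --- is never established; you verify it only in $\SL_2(k)$ and say you ``expect'' to prove it abstractly by combining specialness with sharp transitivity. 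That expectation is the entire content of the theorem. Indeed, $[T,V]=V$ is essentially the theorem of De Medts, Segev and Tent cited in the paper's introduction ($U_\infty=[U_\infty,G_{0,\infty}]$ for special Moufang sets with $|U|\geq 4$), whose published proof is substantially longer and harder than the result you are trying to prove. Nothing in your sketch indicates how the relation $b(a^{-1})=b(a)^{-1}$ would be converted into the required generation statement, and in particular nothing isolates where $|A|\geq 4$ enters beyond the analogy with fields --- which is precisely the point at which an abstract argument is needed, since $A$ and $B$ carry no field structure and need not even be nilpotent.

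The paper sidesteps this obstacle by aiming at a strictly weaker intermediate statement that still suffices: instead of showing that the torus moves $A/A'$ fully (vanishing of coinvariants), it shows that any two nontrivial $a_1,a_2\in A$ with $a_1a_2\neq 1$ lie in the same coset of $A\cap X'$. This follows from a short commutator computation: with $n(a)=ab(a)^{-1}a$ one has $B^{a_1a_2n(a_2^{-1})a_2}=A^{b(a_1)b(a_2)}$ by specialness; rewriting $a_2^{-1}a_1^{-1}ha_2$ (where $h=n(a_1a_2)n(a_2^{-1})\in N_X(A)\cap N_X(B)$) modulo commutators identifies $A^{b(a_1)b(a_2)}$ as $B^{a_1^{-1}a_0}$ with $a_0\in A\cap X'$; a symmetric computation, using $b(a_1)b(a_2)=b(a_2)b(a_3)$, writes the same group as $B^{a_2^{-1}a_4}$ with $a_4\in A\cap X'$; and $N_A(B)=1$ then forces $a_1^{-1}(A\cap X')=a_2^{-1}(A\cap X')$. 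The hypothesis $|A|\geq 4$ now enters transparently: if $A\cap X'=1$, the coset condition confines $A$ to $\{1,a,a^{-1}\}$, so $|A|\leq 3$; otherwise pick $1\neq a\in A\cap X'$ and conclude $A\subseteq A\cap X'$, whence $X=\langle A,B\rangle\subseteq X'$. To salvage your approach you would either have to reprove the De Medts--Segev--Tent theorem, with its attendant case analysis, or weaken your target from ``$T$-coinvariants of $V$ vanish'' to a coset statement of the above kind --- at which point you have reproduced the paper's proof.
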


By \cite[I (1.10), p. 13]{TT-Buch} this yields that a special (abstract) rank one group with abelian unipotent subgroups is either quasi-simple or isomorphic to $\SL_2(2)$ or $\pSL_2(3)$, as was conjectured by Timmesfeld \cite[Remark, p.~26]{TT-Buch}. 

We remark that $X / \Z(X)$ is the little projective group of a Moufang set and that is the point of view 
of T. De Medts, Y. Segev and K. Tent. In \cite[Theorem 1.12]{DM-S-T} they prove that the little projective group $G$ of a special Moufang set $\MM(U, \tau)$ with $|U| \geq 4$ satisfies $U_\infty = [U_\infty, G_{0,\infty}]$. From this they deduce Theorem \ref{Main-Result} above.



The proof of Theorem \ref{Main-Result} given below is short, elementary and self-contained. It does not need a case differentiation whether $A$ is an elementary abelian 2-group or not. We show that $a_1(A \cap X') = a_2(A \cap X')$ 
for all $1 \neq a_1, a_2 \in A$ with $a_1a_2 \neq 1$. (Here $X' = [X,X]$ is the commutator subgroup of $X$.)

\section{The proof of Theorem \ref{Main-Result}}
Let $X$ be a special (abstract) rank one group with unipotent subgroups $A$ and $B$.
For each $1 \neq a \in A$, we set $n(a) := ab(a)^{-1}a$. 
Then $B^{n(a)} = A$. As $b(a)^{-1} = b(a^{-1})$, also $A^{n(a)} = B$. Thus $n(a)n(a') \in H := N_X(A) \cap N_X(B)$, for
all $1 \neq a, a' \in A$.
For $1 \neq a \in A$, we have
\begin{equation}\label{force-h}
B^{a^{-1}n(a)} = B^a
\end{equation}

\begin{lem} \label{useful-equation}
We have $B^{a_1a_2n(a_2^{-1})a_2} = 
A^{b(a_1)b(a_2)}$, for all $1 \neq a_1, a_2 \in A$.
\end{lem}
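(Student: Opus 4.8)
The plan is to simplify both sides of the asserted identity independently and show that each collapses to the single conjugate $B^{a_1 b(a_2)}$. The heart of the matter is a cancellation in the exponent on the left that is made possible precisely by speciality.

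First I would work on the exponent $a_1 a_2 n(a_2^{-1}) a_2$ of the left-hand side. Unwinding the definition gives $n(a_2^{-1}) = a_2^{-1} b(a_2^{-1})^{-1} a_2^{-1}$, and here I would invoke speciality in the form $b(a_2^{-1}) = b(a_2)^{-1}$, so that $b(a_2^{-1})^{-1} = b(a_2)$ and hence $n(a_2^{-1}) = a_2^{-1} b(a_2) a_2^{-1}$. Flanking this by the outer factors $a_2$ then cancels the two inner copies of $a_2^{-1}$, yielding $a_2 n(a_2^{-1}) a_2 = b(a_2)$. Consequently the whole exponent reduces to $a_1 b(a_2)$, and the left-hand side becomes $B^{a_1 b(a_2)}$.

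Next I would treat the right-hand side using the defining property $A^{b(a_1)} = B^{a_1}$ of $b(a_1)$. Conjugating one step further by $b(a_2)$ gives $A^{b(a_1) b(a_2)} = (A^{b(a_1)})^{b(a_2)} = (B^{a_1})^{b(a_2)} = B^{a_1 b(a_2)}$, which matches the expression obtained for the left-hand side and completes the argument. The only step requiring any care is the cancellation $a_2 n(a_2^{-1}) a_2 = b(a_2)$, which is exactly where the speciality hypothesis is used; everything else is formal manipulation of conjugation, so I do not expect a genuine obstacle beyond applying $b(a_2^{-1}) = b(a_2)^{-1}$ correctly.
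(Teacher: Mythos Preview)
Your argument is correct and is essentially the paper's own proof: the paper too cancels $a_2 n(a_2^{-1}) a_2$ to $b(a_2^{-1})^{-1}$ via the definition of $n$, invokes speciality to rewrite this as $b(a_2)$, and then uses $A^{b(a_1)} = B^{a_1}$ to identify both sides with $B^{a_1 b(a_2)}$. The only cosmetic difference is that the paper applies speciality after the cancellation rather than inside $n(a_2^{-1})$, which is immaterial.
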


\begin{proof}
By the definition of $n(a_2^{-1})$ we have $B^{a_1a_2n(a_2^{-1})a_2} = B^{a_1b(a_2^{-1})^{-1}}$.
As $X$ is special, the left hand side of the claim equals $A^{b(a_1)b(a_2)}$.
\end{proof}

\begin{lem} \label{equation-in-A}
We have $a_1 \in a_2(A \cap X')$, for all $1 \neq a_1, a_2 \in A$ with $a_1a_2 \neq 1$.
\end{lem}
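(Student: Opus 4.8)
The claim is equivalent to $a_2^{-1}a_1\in X'$; as $a_2^{-1}a_1$ already lies in $A$, what must be proved is that $a_1$ and $a_2$ have the same image in the abelianization $X/X'$. First I would record that the hypothesis $a_1a_2\neq1$ is equivalent to $b(a_1)b(a_2)\neq1$: the map $a\mapsto b(a)$ is injective on $A\setminus\{1\}$ (it is inverted by $b\mapsto a(b)$), and by specialness $b(a_1)b(a_2)=1$ gives $b(a_2)=b(a_1)^{-1}=b(a_1^{-1})$, whence $a_2=a_1^{-1}$. This equivalence is exactly where the assumption $a_1a_2\neq1$ enters.

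The plan is to extract from Lemma~\ref{useful-equation} an honest identity between group elements and then read it in $X/X'$. Using $a_i\,n(a_i^{-1})\,a_i=b(a_i)$, Lemma~\ref{useful-equation} becomes $B^{a_1b(a_2)}=A^{b(a_1)b(a_2)}$. Since $b(a_1)b(a_2)\in B\setminus\{1\}$, the converse axiom produces its opposite $a_3:=a(b(a_1)b(a_2))\in A$ with $B^{a_3}=A^{b(a_1)b(a_2)}$, and uniqueness of the map $b(\cdot)$ then yields the genuine element identity $b(a_3)=b(a_1)b(a_2)$. I would read everything in $X/X'$, where conjugation is trivial and $B$ is abelian; there this becomes the additive relation $\overline{b(a_3)}=\overline{b(a_1)}+\overline{b(a_2)}$. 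To convert relations among the $b$-values into relations among the $a$-values I would feed in the definition $n(a)=a\,b(a)^{-1}a$, i.e.\ $\overline{n(a)}=2\,\overline a-\overline{b(a)}$, together with $n(a)n(a')\in H$ and $n(a^{-1})=n(a)^{-1}$, the goal being to pin the classes $\overline{n(a)}$ (equivalently $\overline{b(a)}$ in terms of $\overline a$) tightly enough that the additive relation forces $\overline{a_1}=\overline{a_2}$.

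The main obstacle is precisely this last descent. Since $\overline A=\overline B$ (the subgroups are conjugate, e.g.\ $B=A^{n(a)}$, and conjugation is invisible in $X/X'$) one has $X/X'=\overline A$, and each normalizer in sight surjects onto it; hence the subgroup identities of Lemma~\ref{useful-equation} carry no information through membership in $N_X(A)$ or $N_X(B)$, and the content must be squeezed out through genuine element identities only. Controlling the image of $H$ — showing that the Weyl-type elements $n(a)$ all agree modulo $X'$ — looks like the heart of the matter, and I expect to need one further honest identity of $\mu$-square type, e.g.\ $a^{n(a)}=b(a)^{-1}$ (which abelianizes to $\overline{b(a)}=-\overline a$ and reduces the additive relation to $\overline{a_3}=\overline{a_1}+\overline{a_2}$), after which it remains to identify $a_3$ modulo $X'$. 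The bookkeeping must moreover be uniform, covering the case flagged in the introduction where $A$ is an elementary abelian $2$-group and the term $2\,\overline a$ vanishes; that the scheme has to collapse for $|A|<4$ is a reassuring check, the conclusion being false for $\SL_2(2)$ and $\pSL_2(3)$.
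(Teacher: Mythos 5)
There is a genuine gap: what you have written is a plan, not a proof, and you acknowledge this yourself (``I expect to need one further honest identity\dots'', ``it remains to identify $a_3$ modulo $X'$''). Worse, the plan as structured cannot close. The concrete steps you do carry out are correct but tautological: $a_3 := a(b(a_1)b(a_2))$ is \emph{defined} as the preimage of $b(a_1)b(a_2)$ under the bijection $a \mapsto b(a)$, so the identity $b(a_3) = b(a_1)b(a_2)$ has no content, and abelianizing it --- even granting the unproven identity $a^{n(a)} = b(a)^{-1}$, which gives $\overline{b(a)} = -\overline{a}$ --- yields only $\overline{a_3} = \overline{a_1} + \overline{a_2}$. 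This merely \emph{computes} the class of the element you have just defined; since you have no independent description of $a_3$, no relation between $\overline{a_1}$ and $\overline{a_2}$ can follow. Indeed every relation you assemble in $X/X'$ is a formal consequence of the homomorphism property of $X \to X/X'$ together with definitions, so your system of constraints is self-consistent whether or not $\overline{a_1} = \overline{a_2}$ holds; the conclusion must come from input that is genuinely invisible in the abelianization.

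The missing input is precisely of the kind you explicitly dismiss: subgroup identities, made sharp by the regularity fact $N_A(B) = 1$. The paper's proof sets $h := n(a_1a_2)\,n(a_2^{-1}) \in H$ (this is where $a_1a_2 \neq 1$ enters: it makes $n(a_1a_2)$ exist) and obtains from Lemma~\ref{useful-equation} and \eqref{force-h} the subgroup identity $R := A^{b(a_1)b(a_2)} = B^{a_2^{-1}a_1^{-1}ha_2}$; a commutator rewriting $a_2^{-1}a_1^{-1}ha_2 = h\,a_1^{-1}a_0$, in which each commutator lies in $A \cap X'$ because $h$ normalizes $A$, turns this into $R = B^{a_1^{-1}a_0}$ with $a_0 \in A \cap X'$. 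Repeating the argument with the pair $(a_2, a_3)$, where $b(a_3) = b(a_1)^{b(a_2)}$ (note: a different $a_3$ from yours), gives $R = B^{a_2^{-1}a_4}$ with $a_4 \in A \cap X'$. The decisive step is then that $B^x = B^y$ with $x,y \in A$ forces $x = y$, since $N_A(B) = 1$; this converts the subgroup identity into the element identity $a_1^{-1}a_0 = a_2^{-1}a_4$, which is the lemma. You are right that membership in $N_X(B)$ dies in $X/X'$, but the paper never passes to $X/X'$: it keeps exact conjugating elements and exploits the sharpness of the conjugation action of $A$ on the conjugates of $B$. That sharpness is the ingredient absent from your framework, and without it (or a substitute such as the much heavier machinery of De Medts--Segev--Tent) the abelianization bookkeeping cannot produce the statement.
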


\begin{proof}
Let $1 \neq a_1, a_2 \in A$ with $a_1a_2 \neq 1$. 
We set $h := n(a_1a_2)n(a_2^{-1}) \in H$.
By Lemma \ref{useful-equation} and \eqref{force-h} we have $R := A^{b(a_1)b(a_2)} = B^{a_2^{-1} a_1^{-1} h a_2}$.
As $a_2^{-1} a_1^{-1} h a_2 = h a_1^{-1}[a_1^{-1},a_2][a_2^{-1}a_1^{-1}a_2,h][h,a_2]$ and $B^h = B$, 
we obtain that $R = B^{a_1^{-1}a_0}$ with $a _0 \in A \cap X'$.

Note that $b(a_1)b(a_2) = b(a_2)b(a_3)$, where $1 \neq a_3 = a(b_3)\in A$ with 
$1 \neq b_3 = b(a_2)^{-1}b(a_1)b(a_2) \in B$.
Necessarily $a_2a_3 \neq 1$. Otherwise Lemma \ref{useful-equation} implies that
$R = A^{b(a_1)b(a_2)} = A^{b(a_2)b(a_3)} = B^{n(a_3^{-1})a_3} = A$, a contradiction as $R = B^a$ with $a \in A$.
As above we have $R = A^{b(a_2)b(a_3)} = B^{a_2^{-1}a_4}$ with $a_4 \in A \cap X'$.
As $N_A(B) = 1$, we obtain $a_1^{-1}(A \cap X') = a_2^{-1}(A \cap X')$. Thus the claim holds.
\end{proof}

When $A \cap X' = 1$, then Lemma \ref{equation-in-A} implies that 
$A \subseteq \{1, a, a^{-1}\}$, where $1 \neq a \in A$; i.e., $|A| \leq 3$.
Thus for $|A| \geq 4$, we may choose $1 \neq a \in A \cap X'$. By Lemma \ref{equation-in-A}, we obtain
$A \subseteq a(A \cap X') \cup \{1, a^{-1}\} \subseteq A \cap X'$, as desired.

\end{document}